\documentclass[leqno,11pt]{article}
\setlength{\textheight}{23cm}
\setlength{\textwidth}{16cm}
\setlength{\oddsidemargin}{0cm}
\setlength{\evensidemargin}{0cm}
\setlength{\topmargin}{0cm}
\usepackage{amsmath, amssymb}
\usepackage{amsthm}

\numberwithin{equation}{section}

\theoremstyle{plain} %
\newtheorem{theorem}{\noindent\sc \bf{Theorem}}[section] %
\newtheorem{lemma}[theorem]{\noindent\sc \bf{Lemma}}

\newtheorem{proposition}[theorem]{\noindent\sc \bf{Proposition}}

\theoremstyle{definition} %

%

%
\makeatletter
%
%
%
%
\makeatother


%

\newcommand{\C}{\mathbb{C}}
\newcommand{\NN}{\mathbb{N}}
\newcommand{\R}{\mathbb{R}}
\newcommand{\Q}{\mathbb{Q}}
\newcommand{\Z}{\mathbb{Z}}

\newcommand{\SL}{\mathrm{SL}}

%
\def\ord{\mathop{\rm ord}\nolimits}

\newcommand{\grH}{\mathfrak{H}}

\newcommand{\e}{\mathbf{e}}
\newcommand{\act}[1]{\langle #1  \rangle}
\newcommand{\prm}{_{\mathrm{prm}}}
%

%
\newcommand{\tp}{{}^t}

\renewcommand{\Im}{{\rm Im}}
%
%

\newcommand{\cD}{\mathcal{D}}

\newcommand{\cF}{\mathcal{F}}

\newcommand{\cH}{\mathcal{S}}

\newcommand{\cM}{\mathcal{M}}

\newcommand{\cS}{\mathcal{S}}

\newcommand{\cV}{\mathcal{V}}

\newcommand{\inv}{^{-1}}

\newcommand{\x}{^{\times}}
%

%
%

%
%
%

%
%

%
\newcommand{\vecm}[1]{\left( \begin{array}{c} #1 \end{array} \right)}
\newcommand{\Nmat}[2]{\left( \begin{array}{cc} #1 \\ #2 \end{array} \right)}

\renewcommand{\pmod}[1]{\; (\bmod \; #1)}
%
%

%
%


\newcommand{\up}{^{\uparrow}}
\newcommand{\down}{^{\downarrow}}

\newcommand{\oo}{_{\infty}}



\begin{document}

\begin{center}
 {\Large \textbf{A  characterization of the Maass space \\ on
 $O(2,m+2)$  by symmetries\\[0.8cm]}}
{\large \textbf{Bernhard Heim and Atsushi Murase}}
\end{center}

\noindent
In this paper, we define certain symmetries for
automorphic forms on $O(2,m+2)$ and show that the space of automorphic
forms
satisfying these symmetries coincides with the Maass space, the image of
Saito-Kurokawa lifting.

\bigskip

\noindent
\emph{Keywords}: Automorphic forms; Theta lifting; Maass space

\bigskip

\noindent
Mathematics Subject Classification (2000): 11F27, 11F30, 11F50

\section{Introduction}
In \cite{H1}, the first named author introduced 
certain symmetries for Siegel modular forms of even degree
and  showed that the space of Siegel modular forms of degree two
 satisfying the
symmetries coincides with the so-called \emph{Maass
 Spezialschar},
which is the space of Siegel modular forms whose Fourier coefficients
satisfy Maass relations.
Note that this space coincides with the image of Saito-Kurokawa lifting.
Later Bringmann and Heim proved certain symmetries for Jacobi Eisenstein series
of degree two (\cite{BH}). 

On the other hand,
Oda (\cite{O}) and Rallis-Schiffmann (\cite{RS}) independently
studied a theta lifting from elliptic modular forms of integral
or half-integral weight to automorphic forms on the orthogonal
group $O(2,m+2)$.
Note that, if $m=1$, the theta lifting coincides 
with the Saito-Kurokawa lifting.
Later Gritsenko (\cite{G}) and Sugano (\cite{Su})
studied the theta lifting in terms of Jacobi forms of degree $1$.
It is known  that the image of the theta lifting
coincides with the space of holomorphic automorphic forms whose Fourier
coefficients satisfy the Maass relation (\cite{Su}).
Thus
 it is natural to ask whether certain symmetries characterizes the
Maass space in the general orthogonal group case.
The object of the paper is to give an affirmative answer to this question.

In this paper, we introduce symmetries of automorphic forms on
$G=O(2,m+2)$ arising from two embeddings of $\SL_2$ into $G$,
and show that the space of holomorphic automorphic forms on $G$
satisfying the symmetries coincides with the Maass space.

The paper is organized as follows.
In Section 2, we first recall the definitions of automorphic forms
on $G=O(2,m+2)$ and the Maass space.
After defining certain symmetries
for automorphic forms on $G$, 
we state the main result  of the paper (Theorem 2.2):\ 
The space of automorphic
 forms satisfying these symmetries coincides with the space
of those satisfying Maass relations.
As a direct consequence of the characterizaion 
of the Maass space by symmetries, we
show that the restriction mapping induced by an embedding
$G'=O(2,m+1)\hookrightarrow G=O(2,m+2)$ maps the Maass space on $G$ to
that on $G'$.
The proof of Theorem 2.2 is carried out  in Section 3. 
By using some combinatorics,
we prove an algebraic result (Proposition 3.2), from
which Theorem 2.2 follows.

\section*{Notation}
The upper half plane is denoted by $\grH=\{z\in\C\mid\Im(z)>0\}$.
For a real symmetric matrix $R$ of degree $n$, we put
$R(x,y)=\tp x R y$ and $R[x]=\tp x R x$ for $x,y\in \C^n$. 
For a condition $P$, we put 
\[
 \delta(P)=
\begin{cases}
 1&\text{if $P$ holds},\\
 0&\text{otherwise}.
\end{cases}
\]
Denote by $\NN$  the set of natural numbers.
We put $\e[z]=\exp(2\pi\sqrt{-1}z)$ for $z\in\C$.

\section{Main results}
\subsection{The orthogonal group $G$}
Let $S$ be a positive definite even integral symmetric matrix
of degree $m$. 
We put
\[
 Q_1=
\left(
\begin{matrix}
 &&1 \\ &-S& \\ 1 &&
\end{matrix}
\right),
\ 
Q=
\left(
\begin{matrix}
 &&1 \\ &Q_1& \\ 1 &&
\end{matrix}
\right).
\]
In the following, we include the case of $m=0$.
Note that the signatures of $Q_1$ and $Q_2$ are $(1,m+1)$ and $(2,m+2)$,
respectively.

Let
\begin{align*}
 L_0&=\Z^{m}, L_0^*=S\inv L_1, V_0=L_0\otimes_{\Z}\Q=\Q^{m},\\
 L_1&=\Z^{m+2}, L_1^*=Q_1\inv L_1, V_1=L_1\otimes_{\Z}\Q=\Q^{m+2},\\
 L&=\Z^{m+4}, L^*=Q\inv L, V=L\otimes_{\Z}\Q=\Q^{m+4}.
\end{align*}

Let $G=O(Q)$ be the orthogonal group of $Q$ and
 $G\oo^+$  the identity component of $G\oo=G(\R)$. Let
\[
 \cD=\left\{Z=
\vecm{\tau \\ w \\ z}\in \C^{m+2}
\mid \tau,z\in \grH,w\in \C^m,Q_1[\Im(Z)]=\Im (\tau) \Im (z)-\dfrac{1}{2}S[\Im (w)]>0
\right\}.
\]
As is well-known, $\cD$ is a hermitian symmetric domain of type (IV).
We often write $(\tau,w,z)$ for $\vecm{\tau\\w\\z}\in\cD$.
We define an action of $G\oo^+$ on $\cD$ and an automorphic factor
$J\colon G\oo^+\times~ \cD\to \C\x$ by
$g \widetilde Z=\widetilde{g\act{Z}}\, J(g,Z)$
for $g\in G\oo^+$ and $Z\in \cD$, where
\[
 \widetilde Z=
\left(
\begin{matrix}
 -2\inv Q_1[Z]\\ Z \\ 1
\end{matrix}
\right)\in \C^{m+4}.
\]
Let $k$ be an integer  and $F$  a function on $\cD$.
For $g\in G\oo^+$, we define the Petersson slash operator by
$(F|_k g)(Z)=J(g,Z)^{-k}F(g\act{Z})$.

\subsection{Embeddings of $\SL_2$ into $G$}
Let $H=SL_2$. For $h=\Nmat{a&b}{c&d}\in H\oo$ and $z\in \grH$, let
$h\act{z}=(az+b)(cz+d)\inv$ and $j(h,z)=cz+d$
as usual.
We define two embeddings $\iota\up$ and
$\iota\down$
 of $H$ into $G$ by
\begin{align*}
 \iota\up(h)&=
\left(
\begin{matrix}
 a&  &  &-b& \\
  & a&  &  & b\\
  &  &1_m& & \\
-c&  &   & d&\\
  &c &   &  &d
\end{matrix}
\right),
\\
\iota\down(h)&=
\left(
\begin{matrix}
 a& -b &  && \\
-c  & d&  &  & \\
  &  &1_m& & \\
&  &   & a&b\\
  & &   & c &d
\end{matrix}
\right)
\end{align*}
for $h=\Nmat{a&b}{c&d}\in H$, respectively.
It is easily verified that $\iota\up(h)$ and $\iota\down(h)$ commute each
 other and that $\iota\up(H\oo),\iota\down(H\oo)\subset G\oo^+$.
A straightforward calculation shows the following.

\begin{lemma}
 For $h=\Nmat{a&b}{c&d}\in H\oo$ and $Z=(\tau,w,z)\in\cD$, we
 have
\[
  \iota\up(h)\act{Z} = \vecm{h\act{\tau} \\ j(h,\tau)\inv w \\
 z-\dfrac{c}{2j(h,\tau)}S[w]},
 \quad J(\iota\up(h),Z)=j(h,\tau)
\]
and
\[
 \iota\down(h)\act{Z}= \vecm{ 
\tau-\dfrac{c}{2j(h,z)}S[w]\\ j(h,z)\inv w \\
 h\act{z}},
 \quad J(\iota\down(h),Z)=j(h,z).
\]
\end{lemma}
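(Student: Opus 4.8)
The plan is to verify both displays directly from the defining relation $g\widetilde Z=\widetilde{g\act{Z}}\,J(g,Z)$: the bottom coordinate of any $\widetilde W$ equals $1$, so the bottom coordinate of $g\widetilde Z$ reads off $J(g,Z)$, after which dividing the remaining coordinates by $J$ recovers $g\act{Z}$. First I would make $\widetilde Z$ explicit. From the shape of $Q_1$ a one-line computation gives $Q_1[Z]=2\tau z-S[w]$, so that
\[
\widetilde Z=\tp\dk{-\tau z+\half S[w],\ \tau,\ w,\ z,\ 1}\in\C^{m+4},
\]
whose five blocks have sizes $1,1,m,1,1$, matching the block columns of $\iota\up(h)$ and $\iota\down(h)$.

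Next I would treat $\iota\up$. Multiplying $\iota\up(h)$ into $\widetilde Z$ block by block, the bottom entry becomes $c\tau+d=j(h,\tau)$, forcing $J(\iota\up(h),Z)=j(h,\tau)$. Dividing the remaining entries by this factor yields the three coordinates of $\iota\up(h)\act{Z}$: the $\tau$-block gives $(a\tau+b)/(c\tau+d)=h\act{\tau}$, the $w$-block gives $j(h,\tau)\inv w$, and the $z$-block gives $z-\tfrac{c}{2j(h,\tau)}S[w]$, exactly as claimed. The $\iota\down$ case is entirely parallel, with the roles of the $\tau$- and $z$-coordinates interchanged, producing $J(\iota\down(h),Z)=j(h,z)$ together with the stated shift in the $\tau$-coordinate.

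The only step that is not a mere coordinate read-off is the top (quadratic) entry: to know that $\iota\up(h)\widetilde Z$ genuinely has the form $\widetilde W\cdot J$ — rather than a vector whose last four blocks merely happen to agree — I must check that its top entry equals $-\half Q_1[W]\,J$ with $W=\iota\up(h)\act{Z}$. I expect this to be the main (though still routine) obstacle, since it is a true quadratic identity in $\tau,w,z$ rather than a linear comparison; expanding it out, the relation $ad-bc=1$ is exactly what makes the $S[w]$-terms collapse to $\tfrac{a}{2}S[w]$. A cleaner route avoids the expansion altogether: $\widetilde Z$ is isotropic for $Q$, and $\iota\up(h)\in O(Q)$ preserves $Q$, so $\iota\up(h)\widetilde Z$ again lies on the cone $\{Q[v]=0\}$ over the quadric; this, together with the agreement of the last four blocks, pins down the top entry and establishes the defining relation. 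The same isotropy argument closes the $\iota\down$ case.
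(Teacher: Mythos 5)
Your proof is correct and is essentially what the paper intends: the paper offers no written proof beyond ``a straightforward calculation shows the following,'' and your block-by-block computation of $\iota\up(h)\widetilde Z$ and $\iota\down(h)\widetilde Z$ (reading $J$ off the bottom entry and the action off the remaining blocks) is exactly that calculation, with all coordinates checking out. Your isotropy observation — that $Q[\widetilde Z]=0$ and $\iota\up(h)\in O(Q)$ force the top entry to equal $-\tfrac12 J\,Q_1[W]$ once the lower blocks agree — is a clean way to dispatch the one genuinely quadratic verification.
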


\subsection{Automorphic forms}
Let $\Gamma$ be a discrete subgroup of $G\oo$ commensurable with
 $\Gamma(L)=\{\gamma\in G\oo^+\mid \gamma L=L\}$.
We assume that 
\begin{equation}
 \left(
   \begin{matrix}
    1 & -\tp x Q_1 & -2\inv Q_1[x]\\
    0 & 1_{m+2} & x \\
    0 & 0       & 1
   \end{matrix}
\right), \ 
\iota\up(\gamma), \ \iota\down(\gamma')
\in \Gamma
\qquad (x\in \Z^{m+2}, \gamma,\gamma'\in \SL_2(\Z)).
\end{equation}
Note that $\Gamma(L)$ and $\Gamma^*(L)=\{\gamma\in \Gamma(L)\mid
\gamma l\equiv l\pmod{L}\text{ for any }l\in L^*\}$
satisfy this condition.

For  a positive integer $k$,
let $M_k(\Gamma)$ denote the space of holomorphic functions $F$
on $\cD$ satisfying the following two conditions:

\begin{equation}
 F|_k \gamma =F \ \ \text{for any $\gamma\in \Gamma$}.
\end{equation}
\begin{equation}
\text{If $m=0$, $F$ is holomorphic at any cusp of $\Gamma$.}
\end{equation}

Let
\begin{align}
 \Lambda&=\Z\times L_0^*\times \Z,\\
 \Lambda^+&=\{(a,\alpha,b)\in\Lambda\mid a,b, 2ab-S[\alpha]\ge 0\}.
\end{align}
An automorphic form $F\in M_k(\Gamma)$ admits the Fourier expansion
\[
 F(\tau,w,z)=\sum_{(a,\alpha,b)\in
 \Lambda^+}A_F(a,\alpha,b)\e[az-S(\alpha,w)+b\tau].
\]

We say that $\lambda=(a,\alpha,b)\in \Lambda$ is \emph{primitive} if 
$(n\inv a,n\inv\alpha,n\inv b)\not \in \Lambda$ for 
any $n\in \NN, n>1$.
Denote by $\Lambda\prm$ (respectively $\Lambda^+\prm$)
the set of primitive
elements  of $\Lambda$ (respectively $\Lambda^+$).

\subsection{The Maass space and symmetries}
We now define two subspaces of $M_k(\Gamma)$.

Let $M_k^{\cM}(\Gamma)$ be the space of $F\in M_k(\Gamma)$ satisfying
\begin{equation}
 A_F(la,l\alpha,lb)=\sum_{r|l}r^{k-1}A_F((r\inv l)^2ab,(r\inv l)\alpha,1)
\end{equation}
for any $l\in \NN$ and $(a,\alpha,b)\in \Lambda\prm^+$,
where $r$ runs over the positive divisors of $l$.
Note that
\[
A_F(a,\alpha,b)=\sum_{d\in\Z_{>0},\,d\inv(a,\alpha,b)\in\Lambda}d^{k-1}
A_F\left(\dfrac{ab}{d^2},\dfrac{\alpha}{d},1\right)
\]
for $F\in M_k^{\cM}(\Gamma)$.
When $\Gamma=\Gamma^*(L)$, this space
coincides with  the Maass space  introduced by Maass 
(\cite{Ma}) when $m=1$
and Sugano (\cite{Su}) when $m>1$.

To define symmetries, let
\[
 T_n=\{\xi\in M_2(\Z)\mid \det \xi=n\}=\bigcup_{j}SL_2(\Z)\xi_j
\quad(\text{a disjoint union})
\]
for $n\in \NN$.
Define
\begin{align*}
 F|_k T_n^{\uparrow} & =n^{k/2-1}\sum_j
 F|_k\iota\up(n^{-1/2}\xi_j),\\
 F|_k T_n^{\downarrow} & =n^{k/2-1}\sum_j
 F|_k\iota\down(n^{-1/2}\xi_j)
\end{align*}
for $F\in M_k(\Gamma)$. 
Note that
$F|_k T_n^{\uparrow}$ and $F|_k T_n^{\downarrow}$
are not  in $M_k(\Gamma)$ in general.
We define the space 
$M_k^{\cH}(\Gamma)$
 to be
the space of $F\in M_k(\Gamma)$ satisfying 
$F|_k T_n^{\uparrow}=F|_k T_n^{\downarrow}$
for any $n\in \NN$.
It is easy to see that
$F\in M_k^{\cH}(\Gamma)$ if and only if 
$F|_k T_p^{\uparrow}=F|_k T_p^{\downarrow}$
for any prime number $p$.
Observe that, for a prime number $p$, we have
\begin{align*}
 (F|_k  T_p^{\uparrow})
(\tau,w,z)
&=p^{k-1}F\left(p\tau,\sqrt{p}w, z\right)
+p\inv \sum_{c=0}^{p-1}F\left(p\inv(\tau+c),\sqrt{p}\inv w,z\right)\\
(F|_k T_p^{\downarrow})
(\tau,w,z)
&=p^{k-1}F\left(\tau,\sqrt{p}w,pz\right)
+p\inv \sum_{c=0}^{p-1}F\left(\tau,\sqrt{p}\inv w,p\inv(z+c)\right).
\end{align*}

The main result of the paper is stated as follows.

\begin{theorem}
\label{th:main}The Maass space $M_k^{\cM}(\Gamma)$ coincides
 with 
$M_k^{\cH}(\Gamma)$.
\end{theorem}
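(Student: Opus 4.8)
The plan is to convert the functional identity $F|_k T_p^{\uparrow}=F|_k T_p^{\downarrow}$ into a system of relations among the Fourier coefficients $A_F(a,\alpha,b)$, and then to show by a purely combinatorial argument that this system is equivalent to the Maass relations defining $M_k^{\cM}(\Gamma)$. Since, as noted in the text, $F\in M_k^{\cH}(\Gamma)$ if and only if $F|_k T_p^{\uparrow}=F|_k T_p^{\downarrow}$ for every prime $p$, it suffices to treat one prime $p$ at a time.

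First I would substitute the Fourier expansion of $F$ into the two explicit formulas for $(F|_k T_p^{\uparrow})(\tau,w,z)$ and $(F|_k T_p^{\downarrow})(\tau,w,z)$ displayed just before the theorem. In the leading terms the substitutions $\tau\mapsto p\tau$ (resp. $z\mapsto pz$) and $w\mapsto\sqrt p\,w$ rescale the index by $b\mapsto pb$, $\alpha\mapsto\sqrt p\,\alpha$ (resp. $a\mapsto pa$), while in the remaining terms the identity $\sum_{c=0}^{p-1}\e[bc/p]=p\,\delta(p\mid b)$ selects the coefficients with $p\mid b$ (resp. $p\mid a$) and rescales $b\mapsto b/p$, $\alpha\mapsto\alpha/\sqrt p$. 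Collecting the coefficient of $\e[Az-S(\mu,w)+C\tau]$ on each side and writing $\mu=\sqrt p\,\nu$ with $\nu\in L_0^*$ to absorb the half-integral scaling of $w$, the equality $F|_k T_p^{\uparrow}=F|_k T_p^{\downarrow}$ becomes
\[
 p^{k-1}A_F(A,\nu,C/p)+A_F(A,p\nu,pC)=p^{k-1}A_F(A/p,\nu,C)+A_F(pA,p\nu,C)
\]
for all $A,C\ge 0$ and $\nu\in L_0^*$, with the convention that $A_F$ vanishes off $\Lambda^+$ and that the terms containing $A/p$ or $C/p$ occur only when $p\mid A$ or $p\mid C$; the indices $\mu$ with $\sqrt p\,\mu=\eta\in L_0^*\setminus pL_0^*$ contribute the companion relation $A_F(A,\eta,pC)=A_F(pA,\eta,C)$. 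Holomorphy of $F$ gives absolute convergence, so this term-by-term comparison is legitimate (the cusp condition for $m=0$ is needed only to guarantee that the expansion runs over $\Lambda^+$).

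The core is then the algebraic equivalence (Proposition 3.2): a family $\{A_F(a,\alpha,b)\}_{(a,\alpha,b)\in\Lambda^+}$ satisfies the displayed prime relation and its companion for all $p$ if and only if it obeys the Maass relations. For Maass $\Rightarrow$ symmetry I would insert the closed form $A_F(a,\alpha,b)=\sum_d d^{k-1}A_F(ab/d^2,\alpha/d,1)$, the sum over $d$ with $d^{-1}(a,\alpha,b)\in\Lambda$, into all four terms; splitting each divisor sum according to whether $p\mid d$, the summands with $p\nmid d$ cancel across the two sides while those with $p\mid d$ match the two leading terms, so the identity follows. The companion relation is immediate, since $\eta\notin pL_0^*$ forces $p\nmid d$ and then the two divisor ranges coincide. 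For the converse I would argue by induction that the prime relations express every $A_F(a,\alpha,b)$ in terms of the normalized coefficients $A_F(N,\mu,1)$: the rearranged relation $A_F(A,p\nu,pC)=p^{k-1}A_F(A/p,\nu,C)\delta(p\mid A)+A_F(pA,p\nu,C)-p^{k-1}A_F(A,\nu,C/p)\delta(p\mid C)$ together with the companion relation strictly lowers the $b$-index while preserving $ab$, and repeated application telescopes into the divisor sum $\sum_{r\mid l}r^{k-1}A_F((l/r)^2ab,(l/r)\alpha,1)$ along each primitive ray.

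The main obstacle is this converse and, inside it, the combinatorial bookkeeping: one must check that applying the prime relations in any order is consistent and that the reductions assemble into the single closed Maass formula, all while simultaneously tracking the three divisibility constraints $p\mid a$, $p\mid b$ and $\alpha/p\in L_0^*$---the last being exactly what splits the symmetry condition into the two families of relations above. Packaging this as the self-contained arithmetic statement of Proposition 3.2, phrased for an abstract family indexed by $\Lambda^+$, is what cleanly separates the analytic input (holomorphy, the Fourier expansion, and the explicit shape of $T_p^{\uparrow}$ and $T_p^{\downarrow}$) from this number-theoretic core.
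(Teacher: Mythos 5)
Your proposal follows the paper's proof essentially step for step: the reduction of the symmetry condition to the four-term prime relation on Fourier coefficients is the paper's Lemma 3.1, the abstract coefficient-level equivalence you isolate is exactly its Proposition 3.2, and both inclusions are proved there just as you outline --- the Maass-to-symmetry direction by substituting the closed divisor-sum form and cancelling divisor ranges, and the converse by induction on the third coordinate (the paper's induction on $bl$) using your rearranged prime relation, with the case analysis driven by the same three conditions $p\mid a$, $\ord_p b$, and $\alpha\in pL_0^*$ that you identify. The ``combinatorial bookkeeping'' you defer is precisely the content of the paper's Section 3.4, so your plan is correct and matches the published argument.
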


\subsection{The compatibility with restrictions}

Let $(L_0',S')$ be a quadratic sub-lattice of $(L_0,S)$.
Let $Q_1', Q', G'$ and $\cD'$ be as in 2.1 corresponding to $S'$.
We assume that the inverse image $\Gamma'$
of $\Gamma$ by the embedding $G'\subset G$ satisfies a condition
similar to
(2.1).
Then the restriction of $F\in M_k(\Gamma)$ to $\cD'$ gives rise to a
linear mapping $j\colon M_k(\Gamma)\to M_k(\Gamma')$.
Since the symmetry is compatible with $j$, we have proved the 
following.

\begin{theorem}
 We have $j(M_k^{\cH}(\Gamma))\subset M_k^{\cH}(\Gamma')$
and hence $j(M_k^{\cM}(\Gamma))\subset M_k^{\cM}(\Gamma')$.
\end{theorem}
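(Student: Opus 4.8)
The plan is to show that the restriction map $j$ intertwines the symmetry operators attached to $G$ with those attached to $G'$, and then to read off both inclusions from Theorem~\ref{th:main}. Throughout I regard $\cD'$ as the subset of $\cD$ consisting of those $Z=(\tau,w,z)$ with $w$ lying in $V_0'\otimes\C\subset V_0\otimes\C=\C^m$. Because $S'$ is the restriction of $S$, one has $S[w]=S'[w]$ for such $w$, so the defining inequality $Q_1[\Im Z]>0$ on $\cD$ restricts to the corresponding one on $\cD'$; this is exactly the embedding $\cD'\hookrightarrow\cD$ induced by $G'\subset G$, and $(jF)(Z)=F(Z)$ for $Z\in\cD'$. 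Since $F\in M_k^{\cH}(\Gamma)$ is equivalent to $F|_kT_p^{\uparrow}=F|_kT_p^{\downarrow}$ for all primes $p$, it suffices to argue one prime at a time.

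First I would verify that the two embeddings $\iota\up,\iota\down$ of $H=\SL_2$ are compatible with $G'\subset G$. Comparing the matrices defining $\iota\up$ (and $\iota\down$) for $G$ and for $G'$, they differ only in the central block $1_m$ versus $1_{m'}$, which is compatible with $V_0'\otimes\C\subset\C^m$; equivalently, by Lemma 2.1, for $Z\in\cD'$ the point $\iota\up(h)\act{Z}$ again lies in $\cD'$, since $w\mapsto j(h,\tau)\inv w$ is a scalar multiple and $S[w]=S'[w]$, while the formulas for $\iota\up(h)\act{Z}$ and for $J(\iota\up(h),Z)=j(h,\tau)$ are visibly independent of $m$. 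The same holds for $\iota\down$. Hence each slash operator $F\mapsto F|_k\iota\up(h)$, and likewise $F\mapsto F|_k\iota\down(h)$, commutes with restriction to $\cD'$.

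The coset representatives $\xi_j$ in $T_p=\bigcup_j\SL_2(\Z)\xi_j$ depend only on $H$, so the formulas for $T_p^{\uparrow}$ and $T_p^{\downarrow}$ displayed before Theorem~\ref{th:main} are uniform in $m$. Using the previous step — concretely, the scalings $w\mapsto\sqrt{p}^{\,\pm1}w$ occurring there commute with the inclusion $V_0'\otimes\C\subset\C^m$, and the variables $\tau,z$ are untouched by restriction — I obtain
\[
 j\bigl(F|_k T_p^{\uparrow}\bigr)=(jF)|_k T_p^{\uparrow},\qquad
 j\bigl(F|_k T_p^{\downarrow}\bigr)=(jF)|_k T_p^{\downarrow}
\]
for every $F\in M_k(\Gamma)$ and every prime $p$. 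Consequently, if $F\in M_k^{\cH}(\Gamma)$ then $(jF)|_kT_p^{\uparrow}=(jF)|_kT_p^{\downarrow}$ for all $p$, i.e.\ $jF\in M_k^{\cH}(\Gamma')$, which gives $j(M_k^{\cH}(\Gamma))\subset M_k^{\cH}(\Gamma')$. Finally, applying Theorem~\ref{th:main} to both $\Gamma$ and $\Gamma'$ identifies the two symmetry spaces with the corresponding Maass spaces, yielding $j(M_k^{\cM}(\Gamma))\subset M_k^{\cM}(\Gamma')$.

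I do not expect a genuine obstacle here: the well-definedness of $j\colon M_k(\Gamma)\to M_k(\Gamma')$ is granted in the statement, so the whole content is the intertwining of the symmetry operators, and that rests only on the identity $S[w]=S'[w]$ for $w\in V_0'\otimes\C$ together with the fact that the automorphic factors $J(\iota\up(h),Z)=j(h,\tau)$ and $J(\iota\down(h),Z)=j(h,z)$ involve neither the $w$-coordinate nor $m$. The one point deserving care is purely bookkeeping, namely matching the coordinate identification $\cD'\hookrightarrow\cD$ with the matrix embedding $G'\subset G$ so that Lemma 2.1 applies verbatim on $\cD'$.
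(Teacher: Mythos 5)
Your proposal is correct and is exactly the argument the paper intends: the paper's entire proof is the one-line remark that ``the symmetry is compatible with $j$,'' followed by an appeal to Theorem~\ref{th:main}, and your verification (that $\iota\up$, $\iota\down$ and hence $T_p^{\uparrow}$, $T_p^{\downarrow}$ commute with restriction to $\cD'$ because the slash formulas only rescale $w$ and leave $\tau,z$ alone, with $S[w]=S'[w]$ on $V_0'\otimes\C$) simply fills in the details of that compatibility.
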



%

\section{Proof of Theorem \ref{th:main}}

\subsection{Symmetries and Fourier expansion}

\begin{lemma}
 Let $F\in M_k(\Gamma)$.
Then $F\in M_k^{\cH}(\Gamma)$ if and only if
the following holds for any $(a,b,\alpha)\in \Lambda^+$ and
any prime number $p$:
\begin{align*}
 & p^{k-1}A_F(a,p\inv \alpha,p\inv b)-p^{k-1}A_F(p\inv a,p\inv \alpha,
 b)
  +A_F(a,\alpha,pb)-A_F(pa,\alpha,b)=0.
\end{align*}
 Here we make a convention that $A_F(a,\alpha,b)=0$ if
$(a,\alpha,b)\not \in  \Lambda^+$.
\end{lemma}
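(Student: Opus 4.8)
The plan is to translate the functional identity $F|_k T_p^{\uparrow}=F|_k T_p^{\downarrow}$ into a statement about Fourier coefficients by computing both sides explicitly and matching. Since $F|_k T_p^{\uparrow}$ and $F|_k T_p^{\downarrow}$ are holomorphic on $\cD$ and invariant under the translation subgroup, each admits a Fourier expansion in the characters $\e[az-S(\beta,w)+b\tau]$, and two such functions agree if and only if all their Fourier coefficients agree. Combined with the remark (made just before the theorem) that membership in $M_k^{\cH}(\Gamma)$ is equivalent to $F|_k T_p^{\uparrow}=F|_k T_p^{\downarrow}$ for every prime $p$, this reduces the lemma to a coefficientwise identity, and the two implications are obtained simultaneously.

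Starting from the displayed primewise formulas, I would substitute the Fourier expansion of $F$ into each of the four terms. For the first term of $F|_k T_p^{\uparrow}$, namely $p^{k-1}F(p\tau,\sqrt p\,w,z)$, the substitutions $w\mapsto\sqrt p\,w$ and $\tau\mapsto p\tau$ turn the character into $\e[az-S(\sqrt p\,\alpha,w)+pb\tau]$; for the second term, the inner sum $\sum_{c=0}^{p-1}\e[p\inv bc]=p\,\delta(p\mid b)$ detects divisibility by $p$, and after reindexing $b\mapsto pb$ the character becomes $\e[az-S(p^{-1/2}\alpha,w)+b\tau]$. The same computation applied to $F|_k T_p^{\downarrow}$ produces the analogous two families of frequencies with the roles of $a$ and $b$ interchanged. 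Reading off the coefficient of $\e[az-S(p^{-1/2}\alpha,w)+b\tau]$ for $\alpha\in L_0^*$, I expect to obtain $p^{k-1}A_F(a,p\inv\alpha,p\inv b)+A_F(a,\alpha,pb)$ from $F|_k T_p^{\uparrow}$ and $p^{k-1}A_F(p\inv a,p\inv\alpha,b)+A_F(pa,\alpha,b)$ from $F|_k T_p^{\downarrow}$, with the convention $A_F=0$ off $\Lambda^+$ disposing of non-integral or non-admissible indices. Equating these and rearranging yields precisely the asserted relation.

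The main point requiring care — and the only genuine obstacle — is the bookkeeping forced by the factor $\sqrt p$ rescaling the $w$-variable: the two terms within each operator contribute $w$-frequencies in the distinct lattices $\sqrt p\,L_0^*$ and $p^{-1/2}L_0^*$, so one must compare the expansions of $F|_k T_p^{\uparrow}$ and $F|_k T_p^{\downarrow}$ on the common frequency $\beta=p^{-1/2}\alpha$ with $\alpha\in L_0^*$ before reading off coefficients. Once the frequencies are aligned and the geometric sum is evaluated, the identity is immediate; checking that the convention $A_F=0$ outside $\Lambda^+$ makes the relation hold trivially for those $(a,\alpha,b)\in\Lambda^+$ whose shifted indices leave $\Lambda^+$ is routine. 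There is no conceptual difficulty beyond this careful reindexing.
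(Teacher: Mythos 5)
Your proposal is correct and takes essentially the same approach as the paper: the paper carries out your ``frequency alignment'' by evaluating both sides at $(\tau,\sqrt{p}\,w,z)$, which is exactly your device of comparing coefficients on the common $w$-frequencies $p^{-1/2}\alpha$, $\alpha\in L_0^*$. After the geometric-sum evaluation $\sum_{c=0}^{p-1}\e[p^{-1}bc]=p\,\delta(p\mid b)$ and reindexing, the four coefficients you read off, $p^{k-1}A_F(a,p^{-1}\alpha,p^{-1}b)+A_F(a,\alpha,pb)$ versus $p^{k-1}A_F(p^{-1}a,p^{-1}\alpha,b)+A_F(pa,\alpha,b)$, are precisely those appearing in the paper's proof, so the two arguments coincide.
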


\begin{proof}
 We have
\begin{align*}
(F|_k  T_p^{\uparrow})(\tau,\sqrt{p}w,z)
 &=p^{k-1}F(p\tau,pw,z)
  + p\inv \sum_{c=0}^{p-1}F(p\inv(\tau+c),w,z)\\
&=p^{k-1}\sum_{(a,\alpha,b)\in \Lambda^+}A_F(a,p\inv \alpha,p\inv b)
         \e[b\tau+az-S(\alpha,w)]\\
&\quad + \sum_{(a,\alpha,b)\in \Lambda^+}A_F(a,\alpha,p b)
         \e[b\tau+az-S(\alpha,w)]
\end{align*}
and
\begin{align*}
 (F|_k  T_p^{\downarrow})(\tau,\sqrt{p}w,z)
 &=p^{k-1}\sum_{(a,\alpha,b)\in \Lambda^+}A_F(p\inv a,p\inv \alpha,b)
         \e[b\tau+az-S(\alpha,w)]\\
&\quad + \sum_{(a,\alpha,b)\in \Lambda^+}A_F(pa,\alpha,b)
         \e[b\tau+az-S(\alpha,w)],
\end{align*}
from which the lemma immediately follows.
\end{proof}

\subsection{The spaces $\cF^{\cM}$ and $\cF^{\cH}$}
For a function $f$ on $\cV=\Q\times V_0\times \Q$
and $r\in \NN$,
we put
\begin{align*}
M(r)f(a,\alpha,b)&=f(r^2ab,r\alpha,1),\\
N(r)f(a,\alpha,b)&=f(ra,r\alpha,rb)\qquad ((a,\alpha,b)\in\cV).
\end{align*}
It is easy to see that
\begin{align}
  M(r)f(a,\alpha,b)&=M(r)f(ma,\alpha,m\inv b),\\
  M(mr)f(a,\alpha,b)=&M(r)f(m^2a,m\alpha,b)=M(r)f(a,m\alpha,m^2b)
\end{align}
for $r,m\in \NN$.
Let $\cF$ be the space of functions on $\cV$
whose support is contained in $\Lambda$.
For $f\in \cF$, a prime number $p$ and $(a,\alpha,b)\in \cV$,
we set
\[
 I_pf(a,\alpha,b)
=
p^{k-1}f(a,p\inv\alpha,p\inv b)
         -p^{k-1}f(p\inv a,p\inv \alpha,b)+f(a,\alpha,pb)
          -f(pa,\alpha,b).
\]
We define two subspaces of $\cF$ as follows:
\begin{align*}
 \cF^{\cM}&=\{f\in \cF\mid N(l)f(X)=\sum_{r|l}
             r^{k-1}M(r\inv l)f(X)
       \text{ for any } l\in \NN \text{ and
 } X\in \Lambda\prm\},\\
\cF^{\cH}&=\{f\in \cF\mid  I_pf(X)=0 
 \text{ for any prime number } p  \text{ and 
         }X\in \Lambda\}.
\end{align*}

 Let $F\in M_k(\Gamma)$ and consider $A_F$ as an element of $\cF$.
In view of (2.6) and Lemma 3.1, we see that
$F\in M^{\cM}_k(\Gamma)$ if and only if $A_F\in \cF^{\cM}$
and that
$F\in M^{\cS}_k(\Gamma)$ if and only if $A_F\in \cF^{\cS}$.
Thus the  proof of Theorem \ref{th:main} is now reduced to that of the 
following result.

\begin{proposition}We have
 $\cF^{\cM}=\cF^{\cH}$.
\end{proposition}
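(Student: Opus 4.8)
The plan is to collapse both defining conditions to a single uniform ``Maass identity'' and then to match them through one combinatorial identity about $p$-adic valuations. Write, for $(a,\alpha,b)\in\Lambda$,
\[
(\star)\qquad f(a,\alpha,b)=\sum_{d}d^{k-1}f\Big(\tfrac{ab}{d^2},\tfrac{\alpha}{d},1\Big),
\]
the sum over $d\in\NN$ with $d\inv(a,\alpha,b)\in\Lambda$, and let $\gcd(a,\alpha,b)$ denote the largest such $d$. First I would check that $f\in\cF^{\cM}$ if and only if $f$ satisfies $(\star)$ at every point of $\Lambda$: each element of $\Lambda$ is uniquely $N(l)X$ with $X=(a,\alpha,b)\in\Lambda\prm$, and since $N(l)f(X)=f(la,l\alpha,lb)$ and $M(l/r)f(X)=f((l/r)^2ab,(l/r)\alpha,1)$, the defining relation of $\cF^{\cM}$ is exactly $(\star)$ for $N(l)X$; the identities (3.1)--(3.2) guarantee the reindexing is consistent. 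Thus Proposition 3.2 becomes the equivalence ``$(\star)$ holds on all of $\Lambda$'' $\Longleftrightarrow$ ``$I_pf=0$ for every prime $p$''.

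For $\cF^{\cM}\subseteq\cF^{\cH}$ I would argue directly. Fix $p$ and $(a,\alpha,b)\in\Lambda$, and expand each of the four terms of $I_pf(a,\alpha,b)$ by $(\star)$, discarding any point outside $\Lambda$ (where $f$ vanishes by the support convention). A short computation shows that all four contributions live on the common family $f(pab/e^2,\alpha/e,1)$, $e\in\NN$: the terms $f(a,\alpha,pb)$ and $f(pa,\alpha,b)$ contribute with $e=d$, while $p^{k-1}f(a,p\inv\alpha,p\inv b)$ and $p^{k-1}f(p\inv a,p\inv\alpha,b)$ contribute with $e=pd$, the prefactor $p^{k-1}$ absorbing the shift since $p^{k-1}d^{k-1}=e^{k-1}$. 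Collecting the coefficient of $f(pab/e^2,\alpha/e,1)$ reduces $I_pf=0$ to the numerical identity
\begin{align*}
&\delta\big(e\mid\gcd(a,\alpha,pb)\big)-\delta\big(e\mid\gcd(pa,\alpha,b)\big)\\
&\qquad+\delta\big(p\mid e,\ \tfrac{e}{p}\mid\gcd(a,\tfrac{\alpha}{p},\tfrac{b}{p})\big)
-\delta\big(p\mid e,\ \tfrac{e}{p}\mid\gcd(\tfrac{a}{p},\tfrac{\alpha}{p},b)\big)=0.
\end{align*}
None of the operations changes valuations at primes $q\ne p$, so this factors through $p$; setting $A=v_p(a),B=v_p(b),C=v_p(\alpha),n=v_p(e)$ and rewriting each $\delta$ as a bound of the form $n\le\min(\cdots)$, the four terms telescope to $0$.

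For the reverse inclusion $\cF^{\cH}\subseteq\cF^{\cM}$ I would induct on the number of prime divisors of $b$, the case $b=\pm1$ being trivial because then only $d=1$ occurs in $(\star)$. For $b$ with a prime divisor $p$, apply $I_pf(a,\alpha,b/p)=0$ (legitimate since $(a,\alpha,b/p)\in\Lambda$) and solve for the top term:
\[
f(a,\alpha,b)=f(pa,\alpha,\tfrac{b}{p})+p^{k-1}f(\tfrac{a}{p},\tfrac{\alpha}{p},\tfrac{b}{p})-p^{k-1}f(a,\tfrac{\alpha}{p},\tfrac{b}{p^2}).
\]
Each point on the right has third coordinate $b/p$ or $b/p^2$, strictly fewer prime factors than $b$, so the induction hypothesis rewrites it by $(\star)$; since $pa\cdot\tfrac{b}{p}=ab$ and $\tfrac{a}{p}\cdot\tfrac{b}{p}=a\cdot\tfrac{b}{p^2}=\tfrac{ab}{p^2}$, all three resulting sums are again supported on $f(ab/e^2,\alpha/e,1)$ (with $e=d$ for the first, $e=pd$ for the other two). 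Matching coefficients reduces the claim to the same style of indicator identity, which now telescopes to $\delta\big(e\mid\gcd(a,\alpha,b)\big)$, i.e.\ to $(\star)$ for $(a,\alpha,b)$.

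The main obstacle is precisely this combinatorial identity. The delicate point is the interplay between \emph{support} and \emph{valuation}: a reduced point $f(ab/e^2,\alpha/e,1)$ is present only when $e\mid\alpha$ and $e^2\mid ab$, and a term drops out entirely when a coordinate such as $a/p$ or $b/p^2$ fails to be $p$-integral. I expect the heart of the proof to be verifying that these support constraints are exactly compensated by the valuation bounds, so that after reduction to a single prime the alternating sum of step functions $\delta(n\le\min(\cdots))$ collapses cleanly — the $n=0$ contributions cancelling in pairs and the $n\ge1$ parts combining to the correct indicator. Once this bookkeeping is pinned down, both inclusions follow from the one identity run in opposite directions.
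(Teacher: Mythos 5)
Your proposal is sound in its essentials, and for the harder inclusion $\cF^{\cH}\subset\cF^{\cM}$ it shares its skeleton with the paper: your reformulation $(\star)$ is exactly the display following (2.6), the recursion you solve for, $f(a,\alpha,b)=f(pa,\alpha,p\inv b)+p^{k-1}f(p\inv a,p\inv\alpha,p\inv b)-p^{k-1}f(a,p\inv\alpha,p^{-2}b)$, is precisely the paper's rearrangement of $I_pf(a,\alpha,p\inv b)=0$ at the start of Section 3.4, and your induction on the number of prime factors of the third coordinate is the paper's induction on $bl$ in different dress. Where you genuinely diverge is the bookkeeping. The paper writes $X=lX'$ with $X'\in\Lambda\prm$, $l=p^sn$, and runs separate cases ($p\nmid l$ versus $p\mid l$; $\ord_pb=1$ versus $\ord_pb\ge 2$; $\alpha\in L_0^*-pL_0^*$ versus $\alpha\in pL_0^*$), each by explicit divisor-sum manipulation together with Lemma 3.3; you expand everything over the single family $f(ab/e^2,\alpha/e,1)$ and reduce \emph{both} inclusions to one alternating identity among indicators of valuation inequalities. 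I checked that identity in both uses: with $A=\ord_pa$, $B=\ord_pb$, $C=\ord_p\alpha$, $n=\ord_pe$, your first sum is $[\delta(n\le u)-\delta(1\le n\le u)]-[\delta(n\le v)-\delta(1\le n\le v)]=0$ with $u=\min(A,C,B+1)$, $v=\min(A+1,C,B)$, and your second collapses to $\delta(n=0)+\delta(1\le n\le\min(A,C,B))=\delta(n\le\min(A,C,B))$, as claimed; moreover your indicators do automatically encode the support constraints, since $d\inv(a,p\inv\alpha,p\inv b)\in\Lambda$ already forces $(a,p\inv\alpha,p\inv b)\in\Lambda$. So one uniform computation replaces the paper's case analysis and serves both directions, which is a real gain in economy.

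The one misstep is your base case. For $b=1$ the relation $(\star)$ is indeed trivial, but for $b=-1$ it reads $f(a,\alpha,-1)=f(-a,\alpha,1)$, which is not trivial, and in fact is \emph{not} a consequence of $f\in\cF^{\cH}$: all four arguments occurring in $I_pf$ carry the same signs in the first and third coordinates as the input point, so the conditions $I_pf=0$ never couple the region $\{a>0,\,b<0\}$ to $\{a<0,\,b>0\}$, whereas $(\star)$ at such a point does. (Similarly nothing in $\cF^{\cH}$ couples $f(a,0,0)$ to $f(0,0,b)$, yet the $\cF^{\cM}$-condition at $(1,0,0)\in\Lambda\prm$ demands $f(l,0,0)=\bigl(\sum_{r\mid l}r^{k-1}\bigr)f(0,0,1)$.) This blind spot is not peculiar to you: the paper's own induction, on $bl\in\NN$ with base case $bl=1$, silently treats only points with positive third coordinate, and the application to Theorem 2.2 concerns coefficients supported on $\Lambda^+$. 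But as written your assertion that ``$b=\pm1$ is trivial'' is false for $b=-1$; you should restrict the whole argument to positive third coordinate (as the paper implicitly does), since on the degenerate strata just described the two sets of conditions genuinely differ and no proof can close that gap.
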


\subsection{Proof of $\ \cF^{\cM}\subset \cF^{\cH}$} 
In this subsection, we let $p$ be a prime number and $f\in \cF^{\cM}$,
and show that $I_p(X)=0$ for any $X\in\Lambda$.

\begin{lemma}
\label{lem:2-3}
Let $l=p^sn\in\NN$ with $s\ge 0, n\in\NN, p\nmid n$.
 For $X\in \Lambda\prm$, we have
\begin{equation}
\label{eq:2-2}
 N(pl)f(X)-p^{k-1}N(l)f(X)=\sum_{r|n}r^{k-1}M(pr\inv l)f(X).
\end{equation}
\end{lemma}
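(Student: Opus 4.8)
The plan is to expand both terms on the left-hand side directly through the defining relation of $\cF^{\cM}$ and to read off the identity as a telescoping cancellation governed by the factor $p^{k-1}$. Since $X\in\Lambda\prm$, for any $l'\in\NN$ I may use
\[
N(l')f(X)=\sum_{d\mid l'}d^{k-1}M(l'/d)f(X).
\]

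First I would apply this with $l'=pl=p^{s+1}n$. Because $p\nmid n$, every divisor $d$ of $pl$ factors uniquely as $d=p^ie$ with $0\le i\le s+1$ and $e\mid n$, and then $pl/d=p^{s+1-i}(n/e)$. This yields
\[
N(pl)f(X)=\sum_{i=0}^{s+1}\sum_{e\mid n}(p^ie)^{k-1}M\bigl(p^{s+1-i}(n/e)\bigr)f(X).
\]
Applying the same relation with $l'=l=p^sn$, multiplying by $p^{k-1}$, and reindexing $i\mapsto i+1$, I would obtain
\[
p^{k-1}N(l)f(X)=\sum_{i=1}^{s+1}\sum_{e\mid n}(p^ie)^{k-1}M\bigl(p^{s+1-i}(n/e)\bigr)f(X),
\]
which is precisely the part of the previous sum coming from the layers $1\le i\le s+1$.

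Subtracting the two expressions, all layers with $i\ge 1$ cancel termwise, and only the top $p$-power layer $i=0$ survives:
\[
N(pl)f(X)-p^{k-1}N(l)f(X)=\sum_{e\mid n}e^{k-1}M\bigl(p^{s+1}(n/e)\bigr)f(X).
\]
Since $pr\inv l=p^{s+1}(n/r)$, relabeling $e$ as $r$ identifies this with $\sum_{r\mid n}r^{k-1}M(pr\inv l)f(X)$, which is the asserted right-hand side.

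The computation carries no genuine obstacle; the only point demanding care is the index bookkeeping. One must check that multiplication by $p^{k-1}$ shifts the $p$-exponent by exactly one, so that the two double sums agree on the overlapping layers $1\le i\le s+1$ and the sole survivor is the $i=0$ contribution. The hypothesis $p\nmid n$ is used precisely to guarantee that the factorization $d=p^ie$ of a divisor is unique and exhausts all divisors, so that no cross terms are missed. Alternatively, the same result follows by induction on $s$, the base case $s=0$ being the elementary split of the divisors of $pn$ into those of $n$ and their $p$-multiples.
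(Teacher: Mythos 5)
Your proof is correct and follows essentially the same route as the paper: both expand $N(pl)f(X)$ and $p^{k-1}N(l)f(X)$ via the defining relation of $\cF^{\cM}$ (valid since $X\in\Lambda\prm$), decompose each divisor as $p^j r$ with $r\mid n$ using $p\nmid n$, shift the index on the second sum, and observe that all layers cancel except the $j=0$ layer, which is exactly $\sum_{r\mid n}r^{k-1}M(pr\inv l)f(X)$. No gaps; the bookkeeping you flag is precisely the content of the paper's computation.
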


\begin{proof}
The left-hand side of (\ref{eq:2-2}) is equal to
\begin{align*}
&\sum_{r|p^{s+1}n}M(r\inv p^{s+1}n)f(X)-
    p^{k-1}\sum_{r|p^{s}n}M(r\inv p^{s}n)f(X)\\
&=\sum_{j=0}^{s+1}\sum_{r|n}(p^j r)^{k-1}M(p^{s-j+1}r\inv n)f(X)
   -\sum_{j=0}^{s}\sum_{r|n}(p^j r)^{k-1}M(p^{s-j}r\inv n)f(X)\\
&=\sum_{r|n}r^{k-1}M(p^{s+1}r\inv n)f(X),
\end{align*}
which proves the lemma.
\end{proof}

Let $X\in\Lambda$. Then $X=(lc,l\beta,ld)$ with $l=p^sn\in \NN
\;(s\ge 0, n\in\NN, p\nmid n)$ and $ (c,\beta,d)\in \Lambda\prm$.
To simplify the notation, we write $I$ for $I_pf(X)$.
We have
\begin{align*}
 I&=p^{k-1}N(l)f(c,p\inv \beta,p\inv d)
-N(l)f(pc, \beta, d)\\
&\quad -p^{k-1}N(l)f(p\inv c ,p\inv \beta,d)
   +N(l)f(c, \beta, pd).
\end{align*}

First consider the case where
$\beta\in L_0^*-  pL_0^*$.
Then $(pc,\beta,d), (c,\beta,pd)\in \Lambda\prm$
and 
\[
 M(r\inv l)f(pc,\beta,d)=M(r\inv l)f(c,\beta,pd)
\]
 for $r|l$.
Suppose that $s=0$.
Then we have 
\[
 N(l)f(c,p\inv \beta,p\inv d)=N(l)f(p\inv c,p\inv \beta,d)=0,
\]
since $p\inv l \beta\not\in L_0^*$.
It follows that
\begin{align*}
 I&=-N(l)f(pc,\beta,d)+N(l)f(c,\beta,pd)\\
  &=-\sum_{r|l}r^{k-1}M(r\inv l)f(pc,\beta,d)
   +\sum_{r|l}r^{k-1}M(r\inv l)f(c,\beta,pd)\\
&=0.
\end{align*}
Next suppose that $s>0$. By Lemma \ref{lem:2-3}, we have
\begin{align*}
 I&=p^{k-1}N(p\inv l)f(pc,\beta,d)-N(l)(pc,\beta,d)
    -p^{k-1}N(p\inv l)(c,\beta,pd)+N(l)f(c,\beta,pd)\\
  &=-\sum_{r|n}r^{k-1}M(r\inv l)f(pc,\beta,d)
   +\sum_{r|n}r^{k-1}M(r\inv l)f(c,\beta,pd)\\
&=0.
\end{align*}

Next consider the case where
$\beta\in pL_0^*, p|c$ and $p\nmid d$.
Then $(pc,\beta,d), (p\inv c,p\inv \beta,d)\in \Lambda\prm$
and 
\[
 M(r\inv l)f(pc,\beta,d)=M(pr\inv l)f(p\inv c, p\inv \beta,d).
\]
First suppose that $s=0$.
Then $N(l)f(c,p\inv \beta,p\inv d)=0$ since $p\inv ld\not\in \Z$.
By Lemma \ref{lem:2-3}, we have
\begin{align*}
 I&=-N(l)f(pc,\beta,d)-p^{k-1}N(l)f(p\inv c,p\inv \beta,d)
    + N(pl)f(p\inv c,p\inv \beta,d)\\
&=-\sum_{r|l}r^{k-1}M(r\inv l)f(pc,\beta,d)
  +\sum_{r|l}r^{k-1}M(pr\inv l)f(p\inv c,p\inv \beta,d)\\
&=0.
\end{align*}
If $s>0$, we have
\begin{align*}
 I&=p^{k-1}N(p\inv l)(pc, \beta, d)
-N(l)f(pc,\beta,d)\\
&\quad -p^{k-1}N(l)(p\inv c,p\inv \beta,d)
   +N(pl)f(p\inv c,p\inv \beta,d)\\
&=-\sum_{r|n}r^{k-1}M(r\inv l)f(pc,\beta,d)
 +\sum_{r|n}r^{k-1}M(pr\inv l)f(p\inv c,p\inv \beta,d)\\
&=0
\end{align*}
by Lemma 3.4.

We can show that $I=0$ in the other cases in a similar way.

\subsection{Proof of $\ \cF^{\cH}\subset \cF^{\cM}$}
Let $f\in \cF^{\cH}$.
We will show that
\begin{equation}
 \label{eq:fm}
N(l)f(X)=\sum_{r|l}r^{k-1}M(r\inv l)f(X)
\end{equation}
holds for $l\in \NN$ and $X=(a,\alpha,b)\in \Lambda\prm$ by
induction on $bl$.
If $bl=1$, both sides of (\ref{eq:fm}) are equal to $f(X)$.
Suppose that $bl>1$ and let $p$ be a prime factor of $bl$.
Then we have
\begin{equation*}
\label{eq-fh}
  N(l)f(X)=N(l)f(pa,\alpha,p\inv b)
          +p^{k-1}N(p\inv l)f(a, \alpha, b)
          -p^{k-1}N(l)f(a,p\inv \alpha,p^{-2}b).
\end{equation*}

First consider the case where $p\nmid l$. Then $b$ is divisible by
$p$.
Since $(p\inv l a, p\inv l \alpha,p\inv l b)\not\in\Lambda$,
we have $N(p\inv l)f(a, \alpha, b)=0$.
Suppose that $\alpha\in L_0^*-pL_0^*$.
Then $(pa,\alpha,p\inv b)\in \Lambda\prm$
and $N(l)f(a,p\inv \alpha,p^{-2}b)=0$.
By induction, we have 
\begin{align*}
N(l)f(X)&=N(l)f(pa,\alpha,p\inv b)
=\sum_{r|l}r^{k-1}M(r\inv l)f(pa,\alpha,p\inv b)
=\sum_{r|l}r^{k-1}M(r\inv l)f(a,\alpha,b),
\end{align*}
which proves the claim (\ref{eq:fm}).
Next suppose that $\alpha\in pL_0^*$ and $\ord_p b=1$.
A similar argument as above shows that
\begin{align*}
 N(l)f(X)&=N(l)f(pa,\alpha,p\inv b)
=\sum_{r|l}r^{k-1}M(r\inv l)f(pa,\alpha,p\inv b)
= \sum_{r|l}r^{k-1}M(r\inv l)f(a,\alpha,b).
\end{align*}
Suppose that $\alpha\in pL_0^*$ and $\ord_p b\ge 2$.
Then $p\nmid a$ and $(a,p\inv \alpha,p^{-2}b)\in \Lambda\prm$.
By induction, we have
\begin{align*}
 N(l)f(X)&
=N(pl)f(a,p\inv \alpha,p^{-2}b)-p^{k-1}N(l)f(a,p\inv \alpha,p^{-2}b)\\
&=\sum_{r|l}r^{k-1}M(pr\inv l)f(a,p\inv \alpha,p^{-2}b)
+\sum_{r|l}(pr)^{k-1}M(p(pr)\inv l)f(a,p\inv \alpha,p^{-2}b)\\
&\quad
-p^{k-1}\sum_{r|l}r^{k-1}M(r\inv l)f(a,p\inv \alpha,p^{-2}b)\\
&=\sum_{r|l}r^{k-1}M(pr\inv l)f(a,p\inv \alpha,p^{-2}b)\\
&=\sum_{r|l}r^{k-1}M(r\inv l)f(a,\alpha,b).
\end{align*}

Next consider the case where $p|l$.
We let $l=p^sn$ with $s\ge 1, s\in\NN$ and $p\nmid n$.
First suppose that $p\nmid b$.
Then $(p^2 a,p\alpha,b)\in \Lambda\prm$.
By induction, $N(l)f(X)$ ie equal to
\begin{align*}
 &     N(p\inv l)f(p^2 a,p\alpha,b)+p^{k-1}N(p\inv l)f(a,\alpha,b)
      -p^{k-1}\delta(s\ge 2)N(p^{-2}l)f(p^2a,p\alpha,b)\\
&=\sum_{j=0}^{s-1}\sum_{r|n}(p^{j}r)^{k-1} 
     M(p^{s-j-1}r\inv n)f(p^2a,p\alpha,b)
 +\sum_{j=0}^{s-1}\sum_{r|n}(p^{j+1}r)^{k-1} 
     M(p^{s-j-1}r\inv n)f(a,\alpha,b)\\
&\quad  -\delta(s\ge 2)\sum_{j=0}^{s-2}\sum_{r|n}(p^{j+1}r)^{k-1} 
     M(p^{s-j-2}r\inv n)f(p^2a,p\alpha,b)\\
&=\sum_{j=0}^{s-1}\sum_{r|n}(p^{j}r)^{k-1} 
     M(p^{s-j}r\inv n)f(a,\alpha,b)
+(p^sr)^{k-1}M(r\inv n)f(a,\alpha,b)\\
&=\sum_{j=0}^{s}\sum_{r|n}(p^{j}r)^{k-1} 
     M(p^{s-j}r\inv n)f(a,\alpha,b)\\
&=\sum_{r|l}r^{k-1}M(r\inv l)f(a,\alpha,b).
\end{align*}
Suppose that either $\ord_p b=1$ or ``$\ord_p b\ge 2$ and $\alpha\in
L_0^*-pL_0^*$''
holds.
Then $(pa,\alpha,p\inv b)\in \Lambda\prm$.
By induction, $N(l)f(X)$ is equal to
\begin{align*}
 &N(l)f(pa,\alpha,p\inv b)
          + p^{k-1}N(p\inv l)f(a,\alpha,b)
          -p^{k-1}N(p\inv l)f(pa,\alpha,p\inv b)\\
&=\sum_{r|l}r^{k-1}M(r\inv l)f(pa,\alpha,p\inv b)
  +\sum_{r|p\inv l}(pr)^{k-1}M(p\inv r\inv l)f(a,\alpha,b)\\
&\quad -\sum_{r|p\inv l}(pr)^{k-1}M(p\inv r\inv l)f(pa,\alpha,p\inv b)\\
&=\sum_{r|l}r^{k-1}M(r\inv l)f(a,\alpha,b).
\end{align*}
Finally suppose that $\ord_p b\ge 2$ and $\alpha\in pL_0^*$.
Then $p\nmid a$ and $(a,p\inv \alpha,p^{-2}b)\in \Lambda\prm$.
By induction, $N(l)f(X)$ is equal to
\begin{align*}
 &N(pl)f(a,p\inv \alpha,p^{-2} b)
          + p^{k-1}N(p\inv l)f(a,\alpha,b)
          -p^{k-1}N(l)f(a,p\inv \alpha,p^{-2} b)\\
&=\sum_{r|pl}r^{k-1}M(pr\inv l)f(a,p\inv \alpha,p^{-2} b)
 +\sum_{r|p\inv l}(pr)^{k-1}M(p\inv r\inv l)f(a,\alpha,b)\\
&\quad -\sum_{r|l}(pr)^{k-1}M(r\inv l)f(a,p\inv \alpha,p^{-2} b)\\
&=\sum_{j=0}^{s+1}\sum_{r|n}(p^{j}r)^{k-1} 
     M(p^{s-j+1}r\inv n)f(a,p\inv\alpha,p^{-2}b)
+\sum_{j=0}^{s-1}\sum_{r|n}(p^{j+1}r)^{k-1} 
     M(p^{s-j-1}r\inv n)f(a,\alpha,b)\\
&\quad -\sum_{j=0}^{s}\sum_{r|n}(p^{j+1}r)^{k-1} 
     M(p^{s-j}r\inv n)f(a,p\inv\alpha,p^{-2}b)\\
&=
\sum_{r|n}r^{k-1}M(p^{s+1}r\inv n)f(a,p\inv\alpha,p^{-2}b)
+\sum_{j=0}^{s-1}\sum_{r|n}(p^{j+1}r)^{k-1} 
     M(p^{s-j-1}r\inv n)f(a,\alpha,b)\\
&=\sum_{j=0}^{s}\sum_{r|n}(p^{j}r)^{k-1} 
     M(p^{s-j}r\inv n)f(a,\alpha,b)\\
&=\sum_{r|l}r^{k-1}M(r\inv l)f(a,\alpha,b),
\end{align*}
which completes the proof of (\ref{eq:fm}).

\bigskip

\noindent
\textbf{Acknowledgments}

\noindent
The authors gratefully acknowledges partial support from
Max-Planck-Institut f\"ur Mathematik in Bonn and Mathematisches
Forschungsinstitut
Oberwolfach.
The second named
author was partially supported by Grants-in-Aids from JSPS (20540031).

\vspace{1cm}

\bigskip

\noindent
Bernhard Heim\\
\noindent
German University of Technology in Oman (GUtech), Department of Applied
Information Technology, PO Box 1816 Athaibah, PC 130, Sultanate of Oman,
Corner
of  Beach and Wadi Athaibah Way\\
e-mail: bernhard.heim@gutech.edu.om

\bigskip

\noindent
Atsushi Murase\\
\noindent
Department of Mathematics, Faculty of Science, 
Kyoto Sangyo University, Motoyama, Kamigamo, 
Kita-ku, Kyoto 603-8555, Japan\\
e-mail: murase@cc.kyoto-su.ac.jp

\end{document}